\theoremstyle{plain}
\newtheorem{theorem}{Theorem}[section]
\newtheorem{lemma}[theorem]{Lemma}
\newtheorem{corollary}[theorem]{Corollary}
\theoremstyle{definition}
\theoremstyle{remark}
\newtheorem{remark}[theorem]{Remark}
\def\bin #1#2 {\left( \matrix { #1 \cr #2 \cr } \right) }
\begin{document}

\title[A remark on the genus of curves in $\mathbf P^4$]
{A remark on the genus of curves in $\mathbf P^4$}

\author{Vincenzo Di Gennaro }
\address{Universit\`a di Roma \lq\lq Tor Vergata\rq\rq, Dipartimento di Matematica,
Via della Ricerca Scientifica, 00133 Roma, Italy.}
\email{digennar@axp.mat.uniroma2.it}

\bigskip
\abstract Let $C$ be an irreducible, reduced, non-degenerate curve,
of arithmetic genus $g$ and degree $d$, in the projective space
$\mathbf P^4$ over the complex field. Assume that $C$ satisfies the
following {\it flag condition of type $(s,t)$}: {$C$ does not lie on
any surface of degree $<s$, and on any hypersurface of degree $<t$}.
Improving previous results, in the present paper we exhibit a
Castelnuovo-Halphen type bound for $g$, under the assumption $s\leq
t^2-t$ and $d\gg t$. In the range $t^2-2t+3\leq s\leq t^2-t$, $d\gg
t$, we are able to give some information on the extremal curves.
They are arithmetically Cohen-Macaulay curves, and lie on a flag
like $S\subset F$, where $S$ is a surface of degree $s$, $F$ a
hypersurface of degree $t$, $S$ is unique, and its general
hyperplane section  is a space extremal curve, not contained in any
surface of degree $<t$. In the case $d\equiv 0$ (modulo $s$), they
are exactly the complete intersections of a surface $S$ as above,
with a hypersurface. As a consequence of previous results, we get a
bound for the speciality index of a curve satisfying a flag
condition.

\bigskip\noindent {\it{Keywords and phrases}}: Genus of a complex projective curve, Castelnuovo-Halphen
Theory, Flag condition.

\medskip\noindent {\it{MSC2010}}\,: Primary 14N15; 14N05.
Secondary 14H99.

\endabstract

\maketitle

\section{Introduction}

Let $C$ be an irreducible, reduced, non-degenerate curve, of
arithmetic genus $g$ and degree $d$, in the projective space
$\mathbf P^4$ over the complex field. Assume that $C$ satisfies the
following {\it  flag condition of type $(s,t)$}: {\it $C$ does not
lie on any surface of degree $<s$, and on any hypersurface of degree
$<t$}. Under the assumption $s>t^2-t$ and $d>\max(12(s+1)^2, s^3)$,
in \cite[Theorem]{P4}, one proves a sharp upper bound $G(d,s,t)$ for
$g$ (for the definition of $G(d,s,t)$, see Section 2, (ii), below).
In the present paper, we prove that this bound $G(d,s,t)$ applies
also when $s\leq t^2-t$ and $d\gg t$. More precisely, we prove the
following:

\begin{theorem}\label{main}
Let $C$ be an irreducible, reduced, non-degenerate curve, of
arithmetic genus $g$ and degree $d$, in the projective space
$\mathbf P^4$ over the complex field. Assume $C$ is not contained in
any hypersurface of degree $<t$ ($t\geq 3$), and in any surface of
degree $<s$ ($s\geq 3$). Define $\alpha$, $\beta$, $m$ and
$\epsilon$ by dividing $s-1=\alpha t+\beta$, $0\leq\beta<t$, and
$d-1=ms+\epsilon$, $0\leq \epsilon<s$. Assume $s\leq t^2-t$ and
$d>d_0$, where
\begin{equation}\label{d0} d_0:=
\begin{cases}
32t^4\quad {\text{if $s\leq 2t-3$,}}\\
8st^4 \quad {\text{if $s\geq 2t-3$ and $\beta < t-\alpha -2$,}}\\
\max(12(s+1)^2,s^3) \quad {\text{if $s\geq 2t-3$ and $\beta \geq
t-\alpha -2$}}.
\end{cases}
\end{equation}
One has:

\medskip
$\bullet$ if either $s\leq 2t-3$ or  $s\geq 2t-3$ and $\beta
<t-\alpha-2$ or $s\geq 2t-3$ and $\beta > t-\alpha-2$ and either
$s-\epsilon-1<\alpha+\beta+2-t$ or $\beta(\alpha+\beta+2-t)\leq
s-\epsilon-1<(\beta+1)(\alpha+\beta+2-t)$, then $g< G(d,s,t)$;

\medskip
$\bullet$ otherwise $g< G(d,s,t)+4t^3$.
\end{theorem}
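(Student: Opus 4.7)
The plan is the standard Castelnuovo--Halphen reduction. Take a general hyperplane section $\Gamma=C\cap H\subset H\cong \mathbf P^3$; non-degeneracy, uniform position, and the fact that $\Gamma$ lies on no surface of degree $<t$ are immediate from the general position lemma and restriction. Apply the classical inequality $p_a(C)-1\le\sum_{i\ge 1}\bigl(d-h_\Gamma(i)\bigr)$, so that an upper bound on $g$ follows from a lower bound on the Hilbert function $h_\Gamma$. By design, $G(d,s,t)$ is the value $-1+\sum_{i\ge 1}(d-f(i))$, where $f$ is the smallest admissible Hilbert function for $d$ points of $\mathbf P^3$ in uniform position satisfying the flag condition $(s,t)$; the numerical data arising from $s-1=\alpha t+\beta$ and $d-1=ms+\epsilon$ encode the piecewise shape of $f$.

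A preliminary step is to transfer the degree-$<s$ curve condition from $C$ to $\Gamma$. One has to show, for $d\gg t$, that any curve of degree $<s$ through $\Gamma$ would lift to a surface of degree $<s$ through $C$, contradicting the hypothesis. The three thresholds in \eqref{d0} are tailored to this lifting: $\max(12(s+1)^2,s^3)$ places us in the range already covered by \cite[Theorem]{P4}, while $8st^4$ and $32t^4$ are the bounds needed when the linear system of degree-$t$ hypersurfaces through $C$ has, respectively, a generic or a very constrained base locus. The genuinely new situation is $s\le t^2-t$, where, in contrast to the range $s>t^2-t$ treated in \cite{P4}, the linear system of degree-$t$ surfaces through $\Gamma$ does not automatically cut out a curve of degree $s$; by B\'ezout, the base curve could have degree up to $t^2$. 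The idea is then to pick two general hypersurfaces of degree $t$ through $C$, intersect them to produce a surface of degree $\le t^2$ containing $C$, restrict to $H$, and within the resulting surface compare the degree-$s$ constraint on $\Gamma$ with its flag structure. Summing the corresponding lower bound for $h_\Gamma(i)$ yields $G(d,s,t)$ in the ``regular'' cases listed in the first bullet of the theorem.

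The hardest step will be the residual cases in the second bullet, where the extremal Hilbert function is not attained and the correction $+4t^3$ appears. Here two candidate extremal configurations for $\Gamma$ compete, and one must compare their partial sums directly, using the hypotheses on $(\beta,\epsilon)$ to identify which is minimal in each sub-regime. The $4t^3$ term emerges as a uniform upper bound for the discrepancy between the two competing Hilbert functions across all admissible values of the arithmetic data; the delicate point is not the geometry but the arithmetic bookkeeping on the shape of $f$, and in particular the need to show that the discrepancy estimate is uniform in $s$ and $t$ throughout the intermediate range $s\ge 2t-3$, $\beta>t-\alpha-2$.
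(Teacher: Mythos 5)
There is a genuine gap: your plan is to re-run the Castelnuovo--Halphen machinery directly in the new range, i.e.\ to bound $g$ by $\sum_{i\geq 1}(d-h_\Gamma(i))$ and then prove a sharp lower bound for $h_\Gamma$ encoding the flag condition $(s,t)$ with $s\leq t^2-t$. But producing that lower bound is exactly the hard step that is not available in this range, and your sketch gives no mechanism for it: the lifting claim (every curve of degree $<s$ through $\Gamma$ lifts to a surface of degree $<s$ through $C$) is asserted, not proved, and is delicate; the ``two competing extremal configurations'' whose comparison is supposed to produce the case distinctions in $(\beta,\epsilon)$ and the $+4t^3$ term are never identified; and your starting premise, that $G(d,s,t)$ equals $-1+\sum_{i\geq1}(d-f(i))$ for the minimal admissible Hilbert function under the flag condition, cannot be right here: Theorem \ref{main2} shows that in the range $t^2-2t+3\leq s\leq t^2-t$ the maximal genus is about $G(d,s,t)-\frac{d}{s}(\beta-1)$, strictly below $G(d,s,t)$, so no correct extremal Hilbert-function analysis would return $G(d,s,t)$ as the sharp bound, and the paper explicitly does not know the sharp bound in general. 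Finally, the strict inequalities in the equality cases require knowing that a curve attaining the relevant bound must lie on a hypersurface of low degree, which your approach does not supply.

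The actual proof is much more economical and purely arithmetic. Writing $s-1=\alpha t+\beta$ and setting $\tau=\alpha+1$, one has $\tau\leq t-1$ and, crucially, $s>\tau^2-\tau$ (inequality (\ref{tau})); since $C$ lies on no hypersurface of degree $<t$, it lies on none of degree $<\tau$, so the flag condition of type $(s,\tau)$ holds inside the range already covered by \cite[Theorem]{P4}, giving $g\leq G(d,s,\tau)$ at once (for $s\leq 2t-3$ one instead uses the bound $G$ of \cite{Duke}). The theorem then reduces to comparing $G(d,s,\tau)$ (resp.\ $G$) with $G(d,s,t)$: the coefficients of $\frac d2$ satisfy $A'\leq A$, with equality exactly in the regimes listed, and the constant terms obey $|\rho|,|\rho'|\leq 2t^3$, which is where the thresholds $8st^4$, $32t^4$ and the correction $4t^3=|\rho'-\rho|_{\max}$ come from (Lemmas \ref{lemma1} and \ref{lemma2}); strictness when $G(d,s,\tau)=G(d,s,t)$ follows because equality would force $C$, by \cite[Theorem]{P4}, onto a hypersurface of degree $\tau<t$, contradicting the hypothesis. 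Your proposal, as it stands, would require proving a new sharp Hilbert-function bound in the range $s\leq t^2-t$, which is not done in the paper and is not needed for the stated theorem.
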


The proof relies  on the quoted result \cite[Theorem]{P4},  combined
with a purely arithmetic argument, inspired by a remark by
Ellinsgrud and Peskine \cite[p. 2, (B)]{EllP}.

Unfortunately, we are not able to determine the sharp bound.
However, in the range $t^2-2t+3\leq s\leq t^2-t$ (and $d\gg s$), we
are able to give some information on the curves  verifying a flag
condition, with maximal genus, and to determine the sharp bound in
the particular case $d\equiv 0$ (modulo $s$). As in the case
$s>t^2-t$, a hierarchical structure of the family of curves with
maximal genus, verifying flag conditions,  emerges (\cite{P4},
\cite{Duke}, \cite{D}). In fact, we prove the following result (the
number $\beta$ is defined in the claim of of Theorem \ref{main}).

\begin{theorem}\label{main2}
Let $C$ be an irreducible, reduced, non-degenerate curve, of
arithmetic genus $g$ and degree $d$, in the projective space
$\mathbf P^4$ over the complex field. Assume $C$ is not contained in
any hypersurface of degree $<t$ ($t\geq 3$), and in any surface of
degree $<s$ ($s\geq 3$). Assume $t^2-2t+3\leq s\leq t^2-t$, $d>s^4$,
and that $g$ is maximal with respect previous flag condition. Then
one has:
\begin{equation}\label{coarse}
g=G(d,s,t)-\frac{d}{s}\left(\beta-1\right)+O(1),
\end{equation}
where $|O(1)|\leq 2t^3+s^3$. Moreover, $C$ is arithmetically
Cohen-Macaulay, and lies on a flag $S\subset F$, where $S$ is a
surface of degree $s$, $F$ a hypersurface of degree $t$, $S$ is
unique, and its general hyperplane section  is a space extremal
curve not contained in any surface of degree $<t$. In the case
$\epsilon=s-1$ and $d=(m+1)s$, one has
$$
g=G(d,s,t)-\frac{d}{s}\left(\beta-1\right),
$$
and $C$ is the complete intersection of  $S$, with a hypersurface of
degree $m+1$.
\end{theorem}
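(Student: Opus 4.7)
The plan is to combine the upper bound already provided by Theorem \ref{main} with an explicit construction that realises the claimed value of $g$, and then to extract the structural information by tracking equality (or near-equality) in the intermediate estimates. Since $t^2-2t+3\leq s\leq t^2-t$ falls in the range $s\leq t^2-t$ of Theorem \ref{main}, and moreover $d>s^4$ is much larger than the thresholds in \eqref{d0}, I get the a-priori bound $g<G(d,s,t)+4t^3$ for free.

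The lower bound comes from an explicit construction. Choose a smooth space curve $\Gamma\subset\Ps^3$ of degree $s$, not lying on any surface of degree $<t$, and extremal among such curves; in the range $t^2-2t+3\leq s\leq t^2-t$ such $\Gamma$ can be realised as a divisor on a (unique) surface of degree $t$, and is arithmetically Cohen--Macaulay. Lift $\Gamma$ to an ACM surface $S\subset\Ps^4$ of degree $s$ having $\Gamma$ as its general hyperplane section; $S$ is contained in a hypersurface $F$ of degree $t$. On $S$ choose a divisor $C$ of degree $d$, taken to be a complete intersection $S\cap F'$ with $\deg F'=m+1$ when $\epsilon=s-1$. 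Using the exact sequence $0\to \oc_S(-C)\to \oc_S\to \oc_C\to 0$ and the ACM property of $S$, a direct computation of $p_a(C)$ in terms of the genus $\pi(s,t)$ of $\Gamma$ and of $d,m,\epsilon$ yields
\[
g \;=\; G(d,s,t)-\frac{d}{s}(\beta-1)+O(1),
\]
with $|O(1)|\leq 2t^3+s^3$, and $g=G(d,s,t)-\frac{d}{s}(\beta-1)$ on the nose in the complete intersection case. Comparing with the upper bound pins down the maximum genus to within the asserted $O(1)$.

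For the structural statements I would proceed via the general hyperplane section $\Gamma:=C\cap H\subset H\cong\Ps^3$. By uniform position, $\Gamma$ inherits a flag condition of type $(s,t)$ in $\Ps^3$, and the maximality of $g$ together with the Castelnuovo-style bound relating $g$ to $p_a(\Gamma)$ forces $\Gamma$ to be arithmetically Cohen--Macaulay and to be a space extremal curve not lying on any surface of degree $<t$. By the known classification of such extremal space curves (as used in \cite{P4}, \cite{Duke}, \cite{D}), $\Gamma$ lies on a unique surface $\Sigma\subset H$ of degree $s$. Standard semicontinuity/lifting arguments (sections of $\ic_C(s)$ restrict surjectively onto sections of $\ic_\Gamma(s)$ because $h^1\ic_C(s-1)=0$, which follows from the ACM property of $\Gamma$ and uniform position) then lift $\Sigma$ to a unique surface $S\subset\Ps^4$ of degree $s$ containing $C$; the containment $S\subset F$ is forced by the fact that $\Sigma\subset F\cap H$ for the (unique, by the same argument) hypersurface $F$ of degree $t$ through $C$. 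The ACM property of $C$ follows from that of its general hyperplane section $\Gamma$.

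Finally, in the distinguished case $\epsilon=s-1$, $d=(m+1)s$, the lower bound from the construction matches the refined upper bound exactly, so every intermediate inequality must be an equality; tracing these equalities along the lines of \cite[Theorem]{P4} shows that the section of $\oc_S$ cutting $C$ out on $S$ must extend to a section of $\oc_{\Ps^4}(m+1)$, so $C=S\cap F'$ with $\deg F'=m+1$. The main obstacle I anticipate is the uniqueness of $S$ and the transfer of the ACM property from $\Gamma$ to $C$: both require careful vanishing of $h^1$ of ideal sheaves in the relevant twists, and it is here that the hypothesis $d>s^4$ (rather than the weaker assumption of Theorem \ref{main}) is really used, to ensure that the postulation of $C$ is close enough to maximal to apply the cohomological lifting arguments.
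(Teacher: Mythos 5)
The central problem is your upper bound: it is off by a term that grows linearly in $d$. Theorem \ref{main} only gives $g<G(d,s,t)+4t^3$, whereas the value to be matched is $G(d,s,t)-\frac{d}{s}(\beta-1)+O(1)$ with $|O(1)|\leq 2t^3+s^3$. In the range $t^2-2t+3\leq s\leq t^2-t$ one has $\beta-2=s-(t^2-2t+3)\geq 0$, so $\beta-1\geq 1$, and with $d>s^4$ the discrepancy $\frac{d}{s}(\beta-1)>s^3$ is not absorbed by the allowed $O(1)$. So "comparing with the upper bound pins down the maximum genus to within the asserted $O(1)$" is false as stated: the whole point of the theorem is that in this sub-Halphen range the true maximum sits $\frac{d}{s}(\beta-1)$ \emph{below} $G(d,s,t)$, because $H(s,t)=P(s,t)+\beta-1$, where $P(s,t)$ is the Gruson--Peskine--Ellia maximal genus of a space curve of degree $s$ not on surfaces of degree $<t$. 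The missing refined upper bound is obtained in the paper as follows: from the cone construction one first gets $g\geq \frac{d^2}{2s}+\frac{d}{2s}(2P-2-s)-\frac{s^3}{2}$; comparing this with Theorem \ref{main} applied with $s+1$ (i.e. $G(d,s+1,t)+4t^3=\frac{d^2}{2(s+1)}+O(d)$) forces $C$ to lie on a surface $S$ of degree $s$, unique by Bezout; Roth's theorem \cite{EllP} then shows the general hyperplane section $\Sigma$ of $S$ lies on no surface of degree $<t$, hence $p_a(\Sigma)\leq P$, and the bound of \cite[Lemma (2.1)]{D} for curves on $S$ gives $g\leq \frac{d^2}{2s}+\frac{d}{2s}(2p_a(\Sigma)-2-s)+\frac{s^3}{2}$. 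None of this chain appears in your proposal, and without it neither (\ref{coarse}) nor the existence of the flag $S\subset F$ follows.

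There are further gaps in the structural part. Your argument runs through the general hyperplane section $\Gamma=C\cap H$, which is a zero-dimensional scheme: it cannot "be a space extremal curve", uniform position does not give it a flag condition of type $(s,t)$, and a degree-$s$ hypersurface of $H$ through $\Gamma$ would lift to a hypersurface of $\mathbf P^4$, not to the two-dimensional surface $S$ of the statement; the extremality assertion concerns the hyperplane section of $S$, which must be produced first as above. The claim that the a.C.M.\ property of $C$ "follows from that of $\Gamma$" is vacuous (any finite scheme is a.C.M.); what is needed is the equality $g=\sum_{i}(d-h_{\Gamma}(i))$, which the paper gets by constructing, via \cite[Lemma 18]{P4}, an integral a.C.M.\ curve $B$ on the cone over $\Sigma$ whose section has the same Hilbert function as $\Gamma$ and which satisfies the flag condition, so that $p_a(B)\leq g$. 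Likewise, in the case $d=(m+1)s$ you cannot obtain sharpness by "tracing equalities" in Theorem \ref{main}, since that bound is not attained; the paper instead proves the pointwise inequality $h_{\Gamma}(n)\geq h_{B'}(n)$ for the complete intersection $B=S\cap F_{m+1}$ (following \cite[Proposition 15]{P4}, valid for $s\geq t^2-2t+3$), deduces $g=p_a(B)=G(d,s,t)-\frac{d}{s}(\beta-1)$, and only then lifts a degree-$(m+1)$ surface through $\Gamma$ to a hypersurface $G$ with $C=S\cap G$. Finally, your existence construction (an a.C.M.\ surface with prescribed extremal hyperplane section carrying an integral divisor of every large degree $d$) is asserted, not proved; the paper sidesteps this by working on the cone and quoting \cite[Lemma 18]{P4}.
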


The proof follows combining various results in Castelnuovo-Halphen
Theory (\cite{GP1}, \cite{GP3}, \cite{Ellia}, \cite{P4}, \cite{D}).
In particular, it relies on the fact that the theory of space
curves, of degree $s$ and not contained in any surface of degree
$<t$, in the range $t^2-2t+3\leq s\leq t^2-t$, is quite similar to
the theory in the classic range $s>t^2-t$ (\cite{GP1},
\cite{GP3},\cite{Ellia}). Moreover, in order to study the sharp case
$d\equiv 0$ (modulo $s$), we use the same argument as in the proof
of \cite[Proposition 15, p. 130]{P4}, taking into account that it
works well also for $s\geq t^2-2t+3$. We have in mind to apply this
analysis also for the remaining cases $0\leq \epsilon < s-1$, in a
forthcoming paper.

As a consequence of previous results, we get the following bound for the speciality
index. Recall that, for a projective integral curve $C\subset
\mathbf P^N$, one defines the {\it speciality index} $e(C)$ of $C$
as the maximal integer $e$ such that $h^0(C,\omega_C(-e))>0$, where
$\omega_C$ denotes the dualizing sheaf of $C$. The number $d_0$,
appearing in the claim, is defined in (\ref{d0}).

\begin{corollary}\label{c1}
Let $C$ be an irreducible, reduced, non-degenerate curve, of degree
$d$, in the projective space $\mathbf P^4$ over the complex field.
Assume $C$ is not contained in any hypersurface of degree $<t$
($t\geq 3$), and in any surface of degree $<s$ ($s\geq 3$). Let
$e(C)$ denote the index of speciality of $C$. Assume $s\leq t^2-t$
and $d>\max(d_0,6st^4)$. Then one has:
\begin{equation}\label{e1}
e(C)\leq \frac{d}{s}+\frac{s}{t}+t-5.
\end{equation}
Moreover, if $d\equiv 0$ (modulo $s$), $s\leq t^2-t$, and $d>s^4$, then one
has:
\begin{equation}\label{e2}
e(C)\leq \frac{d}{s}+2t-7.
\end{equation}
In the range $t^2-2t+3\leq s\leq t^2-t$, the bound (\ref{e2}) is
sharp. In fact, every curve, complete intersection of a surface of
degree $s$, whose general hyperplane section is a space extremal
curve not contained in any surface of degree $<t$, with a
hypersurface of degree $m+1$, attains the bound.
\end{corollary}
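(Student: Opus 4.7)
The proof combines the bounds on the arithmetic genus $g$ provided by Theorems \ref{main} and \ref{main2} with the elementary estimate
\begin{equation*}
e(C)\;\leq\;\frac{2g-2}{d},
\end{equation*}
valid for every integral projective curve of arithmetic genus $g$ and degree $d$. Indeed, $\omega_C$ is a rank-one torsion-free sheaf of degree $2g-2$, and a rank-one torsion-free sheaf of negative degree on an integral projective curve admits no nonzero global section; thus $h^0(\omega_C(-eH))=0$ whenever $ed>2g-2$, and so $e(C)\leq (2g-2)/d$.

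To prove (\ref{e1}) I would apply Theorem \ref{main} to get $g<G(d,s,t)+4t^3$, and plug this into the inequality above. Expanding the explicit formula for $G(d,s,t)$ (Section 2) in powers of $d$, the leading term $d^2/(2s)$ contributes $d/s$ to $(2g-2)/d$, while the sub-leading terms assemble to $s/t+t-5$; the extra $4t^3$ together with all lower-order terms of $G(d,s,t)$ contribute less than $1$ to $(2g-2)/d$ under the hypothesis $d>\max(d_0,6st^4)$. Since $e(C)$ is an integer, (\ref{e1}) follows.

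The proof of (\ref{e2}) proceeds in exactly the same way, using the sharper bound $g=G(d,s,t)-(d/s)(\beta-1)+O(1)$ from Theorem \ref{main2}. Because $d\equiv 0\pmod{s}$ forces $\epsilon=s-1$, the formula for $G(d,s,t)$ simplifies significantly; combined with the correction $-(d/s)(\beta-1)$, the computation of $(2g-2)/d$ collapses to $d/s+2t-7$, and the $O(1)$ error is absorbed by the assumption $d>s^4$.

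Finally, for the sharpness assertion, I would compute $e(C)$ directly when $C=S\cap G$ is the complete intersection of the distinguished surface $S$ of Theorem \ref{main2} with a hypersurface $G$ of degree $m+1$. Adjunction gives $\omega_C=(\omega_S+(m+1)H)|_C$, so $h^0(\omega_C(-eH))>0$ reduces to $h^0(S,\omega_S+(m+1-e)H)\neq 0$; using the structural information on $S$ provided by Theorem \ref{main2}---$S$ is a.C.M.\ with general hyperplane section an extremal space curve of degree $s$ not on any surface of degree $<t$---one identifies $e(C)=m+2t-6=d/s+2t-7$, so the bound (\ref{e2}) is attained. The principal technical obstacle is the careful tracking of lower-order terms in the asymptotic expansion of $G(d,s,t)$, so that the integer truncation of $(2g-2)/d$ matches the claimed bounds exactly; the numerical hypotheses on $d$ are precisely tuned so that the residual error remains strictly less than one.
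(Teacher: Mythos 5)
Your derivation of (\ref{e1}) follows the paper's route exactly: combine $d\cdot e(C)\leq 2g-2$ with Theorem \ref{main} and expand $G(d,s,t)$. (One caveat: ``the error is less than $1$ and $e(C)$ is an integer'' is not by itself a valid conclusion, since the right-hand side of (\ref{e1}) is in general not an integer; what actually absorbs the $O(1/d)$ terms is the discarded negative summand $-\frac{(t-1-\beta)(1+\beta)(t-1)}{st}$ in the coefficient of $d/2$, together with $d>6st^4$.) The genuine gap is in (\ref{e2}): you deduce it from Theorem \ref{main2}, but that theorem is stated and proved only for $t^2-2t+3\leq s\leq t^2-t$, whereas (\ref{e2}) is asserted for all $3\leq s\leq t^2-t$; for $s<t^2-2t+3$ your argument gives nothing. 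The paper proceeds differently: for $s<t^2-t$ it gets (\ref{e2}) directly from (\ref{e1}), using that $d\equiv 0 \pmod{s}$ makes $e(C)-\frac{d}{s}$ an integer while $\frac{s}{t}+t-5<2t-6$, hence $e(C)-\frac{d}{s}\leq 2t-7$; Theorem \ref{main2} is invoked only in the boundary case $s=t^2-t$, where $\frac{s}{t}+t-5=2t-6$ and (\ref{e1}) alone does not suffice. Moreover, even where Theorem \ref{main2} applies, the computation does not ``collapse'': with $d=(m+1)s$ one gets, via (\ref{2GeP}), $e(C)\leq \frac{d}{s}+\frac{2P-2-s}{s}$, and $\frac{2P-2-s}{s}$ lies strictly between $2t-7$ and $2t-6$, so once again the integrality of $e(C)-\frac{d}{s}$ must be used explicitly.

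For the sharpness statement your adjunction argument is a different route from the paper's, and as written it is incomplete: from $\omega_C\cong(\omega_S\otimes\mathcal O_S(m+1))|_C$ the condition $h^0(\omega_C(-e))>0$ does not reduce to $h^0(S,\omega_S(m+1-e))\neq 0$ without controlling the restriction sequence $0\to\omega_S(-e)\to\omega_S(m+1-e)\to\omega_C(-e)\to 0$ (in particular $H^1(S,\omega_S(-e))$), and the final ``identification'' $e(C)=m+2t-6$ still requires the precise speciality/postulation of $S$, i.e.\ of the extremal space curve $\Sigma$, which you do not establish. The paper's computation avoids this: since the complete intersection $B$ is a.C.M., $e(B)=\max\{n:\,h_{B'}(n)<d\}-1$, and $\Delta h_{B'}(n)=h(n)-h(n-m-1)$ together with $h(2t-5)=s-1$ and $h(2t-4)=s$ gives $e(B)=\frac{d}{s}+2t-7$ at once. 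So either supply the missing cohomological input for the adjunction route, or switch to the Hilbert-function computation; and in any case the case $s<t^2-2t+3$ of (\ref{e2}) must be repaired as above.
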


\noindent Taking into account that $d\cdot e(C)\leq 2g-2$ ($g:=$ the
arithmetic genus of $C$), previous corollary easily follows from
Theorem \ref{main} and Theorem \ref{main2}. In the range $s>t^2-t$
(and $d>\max(\frac{2}{3}s^4,12(s+1)^2)$), the bound (\ref{e1})  is
already known, and it is sharp \cite[Theorem B, and Remark (i), p.
97]{Speciality}. We do not know whether a curve with maximal
speciality given by (\ref{e2}), is necessarily a complete
intersection as above.

As for the numerical assumptions appearing in previous results,
they are certainly not the sharpest for our purposes. They are
only of the simplest form we were able to conceive.

\bigskip

\section{Notations and preliminaries for Theorem \ref{main}}

In this section we establish some notation, and collect some
numerical results  (i.e. (\ref{stimarho}), Lemma \ref{lemma1},
(\ref{stimaR}), and Lemma \ref{lemma2} below), which we need in the
proof of  Theorem \ref{main}. For the proof of these results,  which
consists in  long and elementary calculations, we refer to the
Appendix at the end of the paper (Section 6).

\medskip
(i) Set $ \tau:=\alpha+1$. Observe that if $s\leq t^2-t$, then
$0\leq \alpha\leq t-2$, $1\leq \tau\leq t-1$, and
\begin{equation}\label{tau}
s>\tau^2-\tau.
\end{equation}
Define $\alpha'$ and $\beta'$ by dividing $s-1=\alpha'\tau+\beta'$,
$0\leq \beta'<\tau$. Let $x$ be the unique integer $0\leq x \leq
t-1$ such that $t-(x+1)(\alpha+1)\leq \beta<t-x(\alpha+1)$. Then we
have $\alpha'=t-1-x$, and $\beta'=\beta -[t-(x+1)(\alpha+1)]$.

\medskip
(ii) As in \cite[p. 120]{P4}, we define the numbers:
$$
G(d,s,t):=\frac{d^2}{2s}+\frac{d}{2}\left[\frac{s}{t}+t-5-\frac{(t-1-\beta)(1+\beta)(t-1)}{st}\right]+\rho+1,
$$
and
$$
G(d,s,\tau):=\frac{d^2}{2s}+\frac{d}{2}
\left[\frac{s}{\tau}+\tau-5-\frac{(\tau-1-\beta')(1+\beta')(\tau-1)}{s\tau}\right]+\rho'+1,
$$
where $\rho=\rho(s,t,\epsilon)$ and  $\rho'=\rho(s,\tau,\epsilon)$
are \lq\lq constant terms\rq\rq, for whose definitions we refer to
the Appendix, (i). When $s\leq t^2-t$, one has
\begin{equation}\label{stimarho}
|\rho|\leq 2t^3,\quad |\rho'|\leq 2t^3.
\end{equation}

\begin{lemma}\label{lemma1}
1) If  $t+1 \leq s\leq t^2-t$ and $\beta < t-\alpha -2$  and $d >
8st^4$, then $G(d,s,\tau)< G(d,s,t)$.

\smallskip
2) If $s\leq t^2-t$ and $\beta >t-\alpha-2$ and either
$s-\epsilon-1<\alpha+\beta+2-t$ or $\beta(\alpha+\beta+2-t)\leq
s-\epsilon-1<(\beta+1)(\alpha+\beta+2-t)$, then $G(d,s,\tau)=
G(d,s,t)$.

\smallskip
3) If either $s\leq t$ or $t+1 \leq s\leq t^2-t$ and $t-\alpha
-2\leq \beta$, then $G(d,s,\tau)=G(d,s,t)+(\rho'-\rho)\leq
G(d,s,t)+4t^3$.
\end{lemma}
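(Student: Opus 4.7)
The plan is to compare the two quantities $G(d,s,t)$ and $G(d,s,\tau)$ directly, using the formulas from \S 2(ii). Since the $d^2/(2s)$ terms agree, one gets
$$
G(d,s,t)-G(d,s,\tau)=\frac{d}{2}\,L\;+\;(\rho-\rho'),
$$
where
$$
L:=\frac{s}{t}-\frac{s}{\tau}+(t-\tau)-\frac{(t-1-\beta)(\beta+1)(t-1)}{st}+\frac{(\tau-1-\beta')(\beta'+1)(\tau-1)}{s\tau}.
$$
I would rewrite $L$ using $\tau=\alpha+1$, the two divisions $s-1=\alpha t+\beta$, $s-1=\alpha'\tau+\beta'$, and the explicit expressions $\alpha'=t-1-x$, $\beta'=\beta-[t-(x+1)(\alpha+1)]$ from \S 2(i). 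After clearing the denominator $st\tau$ and grouping, one obtains a factored expression for $L$ whose sign is controlled by the comparison of $\beta$ with $t-\alpha-2$.

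For part 3, when $s\leq t$ one has $\tau=t$ and $L=0$ trivially. When $t+1\leq s\leq t^2-t$ and $t-\alpha-2\leq\beta$, the substitution of $(\alpha',\beta')$ should collapse $L$ to zero identically after cancellation: this is a direct arithmetic identity to be checked by hand. Once $L=0$, the difference reduces to $G(d,s,\tau)-G(d,s,t)=\rho'-\rho$, and (\ref{stimarho}) gives $|\rho'-\rho|\leq 4t^3$.

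For part 1 ($t+1\leq s\leq t^2-t$ and $\beta<t-\alpha-2$), the same algebraic manipulation should now produce $L=P/(st\tau)$ with $P$ a strictly positive integer quantity, yielding a lower bound of the form $L\geq 1/(st\tau)\geq 1/(st^2)$. Combined with $d>8st^4$ and $|\rho-\rho'|\leq 4t^3$ from (\ref{stimarho}), one has $dL/2> 4t^3\geq |\rho-\rho'|$, hence $G(d,s,\tau)<G(d,s,t)$.

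Part 2 refines part 3: under $\beta>t-\alpha-2$ we still have $L=0$, so the only remaining task is to verify $\rho=\rho'$. This is the main obstacle, since it requires unpacking the piecewise definition of $\rho(s,t,\epsilon)$ from the Appendix and checking that, when $s-\epsilon-1<\alpha+\beta+2-t$ or when $\beta(\alpha+\beta+2-t)\leq s-\epsilon-1<(\beta+1)(\alpha+\beta+2-t)$, both $\rho$ and $\rho'$ fall on corresponding branches that evaluate to the same quantity. The algebra itself is elementary throughout; the difficulty lies in careful bookkeeping of the integer parameters $x,\alpha,\alpha',\beta,\beta'$ and of the several piecewise branches in the definition of $\rho$.
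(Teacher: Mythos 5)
Your route is the same as the paper's (compare the coefficients of $d/2$ in the two bounds, then absorb $\rho'-\rho$ via (\ref{stimarho})), but two concrete points do not hold up as written. First, in part 3 you dismiss the case $s\leq t$ by asserting $\tau=t$; this is false: for $s\leq t$ one has $\alpha=0$, hence $\tau=\alpha+1=1$. The equality of the linear coefficients ($L=0$ in your notation) is still true there, but it is not trivial — one must compute (both coefficients equal $s-4$ when $\alpha=0$, $\beta=s-1$, $\beta'=0$, $\alpha'=s-1$). Second, and more seriously, your numerical step in part 1 does not close. From your own anticipated bound $L\geq 1/(st\tau)$ and $d>8st^4$ you only get $dL/2>4t^3/\tau$, and in part 1 one has $s\geq t+1$, hence $\alpha\geq 1$ and $\tau\geq 2$, so this is $\leq 2t^3$ and does not dominate $|\rho-\rho'|\leq 4t^3$; with your weaker form $L\geq 1/(st^2)$ you get only $dL/2>4t^2$. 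The threshold $d>8st^4$ is calibrated to the sharper estimate $A-A'\geq \alpha/s\geq 1/(st)$, and that sharper estimate is exactly what the computation you defer produces: writing $2H=\alpha t^2+(\alpha^2-4\alpha)t+\beta^2+2\alpha+(2\alpha-1)\beta$ and its analogue for $(\tau,\alpha',\beta')$, one finds
\begin{equation*}
A'-A=\frac{\alpha x}{s}\bigl(\alpha x+\alpha+x-2t+3+2\beta\bigr),
\end{equation*}
so the denominator is $s$, not $st\tau$, and $A-A'$ is at least $\alpha x/s$ whenever it is nonzero. Without deriving this (or some equivalent integrality statement), the conclusion of part 1 under $d>8st^4$ is out of reach.

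Beyond that, the heart of the lemma — the sign analysis of $A'-A$ — is only announced, not performed: the vanishing precisely when $\alpha=0$, or $x=0$, or $x=1$ and $\beta=t-\alpha-2$ (equivalently $s\leq t$, or $s\geq t+1$ and $\beta\geq t-\alpha-2$), and the strict negativity when $\beta<t-\alpha-2$, depend on the inequality $t-\beta>x(\alpha+1)$ coming from the definition of $x$, which you never invoke. For part 2 you correctly identify that the remaining task is to check $\rho=\rho'$ on the relevant branches of the piecewise definition (the paper also omits this as a direct computation), but combined with the issues above the proposal, as it stands, does not establish parts 1 and 3.
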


\medskip
(iii) We recall also the definition of the bound $G$ for the genus
of a curve in $\mathbf P^4$ of degree $d$, not contained in any
surface of degree $<s$ (compare with \cite[p. 230-231, and p. 241,
Theorem 5.1]{Duke}, and \cite[p. 91-92, (4) and
(4$'$)]{Speciality}). Define $w$ and $w_1$ by dividing $s-1=2w+w_1$,
$0\leq w_1<2$. Then we have
\begin{equation}\label{bduke}
G:=\frac{d^2}{2s}+\frac{d}{2}\left(\frac{2\pi-2}{s}-1\right)+R,
\end{equation}
where $\pi=w(w-1+w_1)$, and $R=R(s,\epsilon)$ is a constant term
(for the definition, see Appendix, (ii)) such that
\begin{equation}\label{stimaR}
|R|\leq s^2.
\end{equation}

\begin{lemma}\label{lemma2}
If $3\leq s\leq 2t-3$ and $d > 32t^4$, then $G<G(d,s,t)$.
\end{lemma}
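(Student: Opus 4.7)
The plan is to prove the lemma by showing that $G(d,s,t) - G$ is strictly positive. Since the quadratic-in-$d$ terms $d^2/(2s)$ appearing in both expressions agree, the difference reduces to
$$G(d,s,t) - G \;=\; \frac{d}{2}\,C_1 \;+\; (\rho + 1 - R),$$
where
$$C_1 \;:=\; \frac{s}{t} + t - 4 - \frac{(t-1-\beta)(1+\beta)(t-1)}{st} - \frac{2\pi - 2}{s}.$$
By \eqref{stimarho} and \eqref{stimaR}, combined with $s \leq 2t - 3$, the constant term satisfies $|\rho + 1 - R| \leq 2t^3 + s^2 + 1 \leq 4t^3$ for $t \geq 3$. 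It will therefore suffice to prove the sharper estimate $C_1 \geq 1/t$; together with the hypothesis $d > 32 t^4$, this gives $\tfrac{d}{2} C_1 > 16 t^3$, which comfortably dominates the constant.

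The heart of the plan is to establish $C_1 \geq 2/s$ by splitting on $\alpha = \lfloor (s-1)/t \rfloor$, which, under the constraint $s-1 \leq 2t-4 < 2t$, takes only the values $0$ and $1$. When $\alpha = 0$ (so $3 \leq s \leq t$ and $\beta = s - 1$), the cross term collapses to $(t - s)(t - 1)/t$, reducing $C_1$ to $s - 3 - (2\pi - 2)/s$; substituting the explicit formula for $\pi$ according to the parity of $s$ yields $C_1 = 2w^2/s$ (if $s = 2w + 1$) or $C_1 = w$ (if $s = 2w + 2$), each $\geq 2/s$ for $s \geq 3$. When $\alpha = 1$ (so $t + 1 \leq s \leq 2t - 3$ and $\beta = s - t - 1$), I would set $m := s - t$, giving $1 \leq m \leq t - 3$, and clear the denominator $st$; a short rearrangement produces
$$s \cdot C_1 \;=\; t^2 - 3t + m^2 - m + 2 - 2\pi.$$
Substituting $2\pi = (s-1)(s-3)/2$ in the odd case, and $2\pi = (s - 2)^2/2$ in the even case, followed by algebraic simplification, yields the identities
$$s C_1 = \tfrac{1}{2}(t - m - 1)^2 \quad (s \text{ odd}), \qquad s C_1 = \tfrac{1}{2}(t - m)(t - m - 2) \quad (s \text{ even}).$$
The constraint $m \leq t - 3$ forces $C_1 \geq 2/s$ in the odd case; in the even case, $s = t + m$ even and $s \leq 2t - 3$ together force $m \leq t - 4$, whence $C_1 \geq 4/s$. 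In all cases $C_1 \geq 2/s \geq 2/(2t - 3) > 1/t$, as required.

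The main obstacle will be the algebraic simplification in the $\alpha = 1$ case. Cruder estimates on $\pi$ (for instance, the uniform approximation $2\pi/s \approx s/2 - 2$) are not sharp enough near the boundary $s = 2t - 3$, where $C_1$ actually attains its minimum value $2/s$; only the clean factorisations above, obtained by carefully respecting the parity of $s$, produce a lower bound on $C_1$ strong enough to be overcome by the hypothesis $d > 32 t^4$ rather than by some larger threshold.
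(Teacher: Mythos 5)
Your proposal is correct: your $C_1$ is exactly the paper's $A-A''$, and your closed forms (e.g. $sC_1=\tfrac{1}{2}(t-m-1)^2$, $\tfrac{1}{2}(t-m)(t-m-2)$) agree with the paper's formulas $A''-A=\frac{1}{2s}(-\beta^2+w_1)$ for $\alpha=0$ and $A''-A=\frac{1}{2s}\left[-(t-\beta-2)^2+w_1\right]$ for $\alpha=1$, after which both arguments dominate the constant term $\rho+1-R$ (bounded via (\ref{stimarho}) and (\ref{stimaR})) using $d>32t^4$. This is essentially the same approach as the paper, with your explicit parity split playing the role of the paper's symbolic $w_1$.
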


\bigskip

\section{The proof of  Theorem \ref{main}}

\begin{proof}[Proof of Theorem \ref{main}]
First assume $s\leq 2t-3$. By \cite[p. 241, Theorem 5.1]{Duke}, we
know that  $g\leq G$. Therefore, in this case our claim follows
from Lemma \ref{lemma2}.

\medskip
Next assume $s\geq 2t-3$ and $\beta <t-\alpha-2$. In this case we
may assume also that $t+1\leq s$, otherwise $s=t=3$, and we fall
back in the previous case. If $t+1\leq s$, then $\alpha\geq 1$,
hence $\tau \geq 2$. Moreover,  $\tau<t$. Therefore, $C$ is not
contained in any hypersurface of degree $<\tau$. Since
$s>\tau^2-\tau$ (compare with (\ref{tau})), we may apply
\cite[Theorem]{P4}, and deduce $g\leq G(d,s,\tau)$. By Lemma
\ref{lemma1}, 1), we get $g< G(d,s,t)$.

\medskip
Now assume $s\geq 2t-3$ and $\beta >t-\alpha-2$ and either
$s-\epsilon-1<\alpha+\beta+2-t$ or $\beta(\alpha+\beta+2-t)\leq
s-\epsilon-1<(\beta+1)(\alpha+\beta+2-t)$. As before, we have $g\leq
G(d,s,\tau)$. In this case, Lemma \ref{lemma1}, 2), says that
$G(d,s,\tau)=G(d,s,t)$. Therefore, we get $g\leq G(d,s,t)$. We
cannot have $g=G(d,s,t)$, otherwise, by \cite[Theorem]{P4},  $C$
should be contained in a hypersurface of degree $\tau<t$.

\medskip
Finally, in the remaining cases, as before we have $g\leq
G(d,s,\tau)$. By Lemma \ref{lemma1}, 3), it follows that $g\leq
G(d,s,t)+4t^3$. Again, we cannot have $g=G(d,s,t)+4t^3$, otherwise,
by \cite[Theorem]{P4},  $C$ should be contained in a hypersurface of
degree $\tau<t$.
\end{proof}

\bigskip

\section{Notations and preliminaries for  Theorem \ref{main2} and Corollary \ref{c1}}

In order to prove Theorem \ref{main2} and Corollary \ref{c1}, in
this section we recall some properties of arithmetically
Cohen-Macaulay varieties. Moreover,  we recall the main results of
the theory of space curves of degree $s$, not contained in any
surface of degree $<t$, in the range $t^2-2t+3\leq s\leq t^2-t$ (see
\cite{GP1}, \cite[p. 219]{GP3}, \cite[10.8. Teorema, p. 56]{Ellia}).

\medskip
(i) If $V\subseteq \mathbf P^N$ is an integral projective variety,
we denote by $h_V(t)$ and $p_V(t)$ the Hilbert function and the
Hilbert polynomial of $V$. We denote by $p_a(V)$ its arithmetic
genus. The variety $V$ is said to be {\it arithmetically
Cohen-Macaulay} (shortly a.C.M.) if all the restriction maps
$H^0(\mathbf P^N, \mathcal O_{\mathbf P^N}(i))\to H^0(V,\mathcal
O_V(i))$ ($i\in\mathbf Z$) are surjective, and $H^j(V,\mathcal
O_V(i))=0$ for all $i\in\mathbf Z$ and $1\leq j\leq \dim V-1$
(\cite[9-8]{Sr}, \cite[p. 84]{EH}). If $\dim V\geq 2$, then $V$ is
a.C.M. if and only if its general hyperplane section is. If $V$ is a
curve of degree $d$, and $V'$ denotes its general hyperplane
section, then
$$p_a(V)\leq \sum_{i=1}^{+\infty} d-h_{V'}(i),$$ and equality occurs
if and only if $V$ is a.C.M. \cite[p. 83-84]{EH}. Moreover, using
the diagram in \cite[p. 232]{Duke}, one may prove that if $V$ is an
a.C.M. curve, and $e(V)$ denotes its speciality index, then:
$$
e(V)=\max\left\{n:\, h_{V'}(n)<d\right\}-1.
$$

\medskip
(ii) Fix integers $s,t\geq 3$, with $t^2-2t+3\leq s \leq t^2-t$. Let
$\Sigma \subset \mathbf P^3$ be an integral curve, of degree $s$,
not contained in any surface of degree $< t$, and of maximal
arithmetic genus, that we denote by $P=P(s,t)$ (compare with
\cite[p. 219]{GP3}, \cite[p. 43-49]{GP1}, \cite[10.8. Teorema, p.
56]{Ellia}, and (\ref {bGP3}) below). Set
$\nu:=s-(t^2-2t+3)(=\beta-2)$. $\Sigma$ is an a.C.M. curve,
contained in a surface of degree $t$, with {\it caract\`ere
num\'erique} $(n_0,n_1,\dots,n_{t-1})$ given by:
$$
n_i:= \begin{cases} 2t-3-i\quad {\text{if $0\leq i\leq t-3-\nu$,}}
\\ 2t-2-i\quad {\text{if $t-2-\nu\leq i\leq t-2$,}}
\\t\quad {\text{if $i=t-1$,}}
\end{cases}
$$
(\cite[p. 40, p. 45]{GP1}, \cite[p. 20, and proof of 10.4: Lemma, p.
53]{Ellia}). If $h=h(n)$ denotes the Hilbert function of the general
plane section of $\Sigma$, then one has, for every integer $n$,
$$
h(n)=\sum_{i=0}^{t-1}\left[(n-i+1)_+-(n-n_i+1)_+\right],
$$
where $(x)_+:=\max(0,x)$ \cite[3.7: Lemma, p. 20]{Ellia}. It follows
that:
\begin{equation}\label{fh}
h(n)=\binom{n+2}{2}-2\binom{n+2-t}{2}+\binom{n-2t+4}{2}-\binom{n+1-t}{1}-\binom{n+1-t-\nu}{1}
\end{equation}
for every $n$, where we assume $\binom{a}{b}=0$ if $a<b$. Notice
that
\begin{equation}\label{fh2}
h(2t-5)=s-1, \quad h(2t-4)=s.
\end{equation}
We have:
\begin{equation}\label{bGP3}
p_a(\Sigma)=P(s,t)=\sum_{i=1}^{+\infty}
s-h(i)=t^3-5t^2+(\beta+7)t+\frac{\beta^2-7\beta}{2}-1.
\end{equation}
Comparing with (\ref{acca}) (see below) we get:
$$
H(s,t)=P(s,t)+\beta-1.
$$
Therefore, we may write (compare with (\ref{Gacca})):
\begin{equation}\label{GeP}
G(d,s,t)=
\frac{d^2}{2s}+\frac{d}{2s}(2P-2-s)+\frac{d}{s}(\beta-1)+\rho+1.
\end{equation}
Observe that, if $d=(m+1)s$, then $\rho=0$, and therefore, in this
case, we have:
\begin{equation}\label{2GeP}
G(d,s,t)=
\frac{d^2}{2s}+\frac{d}{2s}(2P-2-s)+\frac{d}{s}(\beta-1)+1.
\end{equation}

\bigskip

\section{The proof of  Theorem \ref{main2} and of Corollary \ref{c1}}

\begin{proof}[Proof of Theorem \ref{main2}] We proceed by several steps.

\medskip
{\it Step 1.} {\it First we prove that $g\geq \frac{d^2}{2s}+\frac{d}{2s}(2P-2-s)-\frac{s^3}{2}$, where
$P=P(s,t)$ is the number defined in Section 4, (ii).}

\smallskip
Let $\Sigma\subset \mathbf P^3$ be an integral curve of degree $s$,
not contained in any surface of degree $<t$, and of maximal genus
$P=P(s,t)$ (compare with Section 4, (ii)). Let $X\subset \mathbf
P^4$ be the cone over $\Sigma$. Since $\Sigma$ is a.C.M., and $d\gg
s$, there exists an integral a.C.M. curve $Y$ on $X$, of degree $d$
\cite[Lemma 18]{P4}. Since $d\gg s$, Bezout's theorem implies that
$Y$ is not contained in surfaces of degree $<s$. Moreover, $Y$ is
not contained in a hypersurface of degree $<t$, otherwise this
hypersurface would contain $X$ because $d\gg s$, and this is not
possible, for $\Sigma$ is not contained in a surface of degree $<t$.
Therefore, $Y$ satisfies the flag condition of type $(s,t)$, hence
$p_a(Y)\leq g$, for $g$ is maximal. On the other hand, since $Y$ is
a.C.M., we have $p_a(Y)=\sum_{i=1}^{+\infty} d-h_{Y'}(i)$ ($Y'$:=
general hyperplane section of $Y$), and so, by \cite[Lemma,
(2.1)]{D},  we have $p_a(Y)\geq
\frac{d^2}{2s}+\frac{d}{2s}(2P-2-s)-\frac{s^3}{2}$, hence $g\geq
\frac{d^2}{2s}+\frac{d}{2s}(2P-2-s)-\frac{s^3}{2}$.

\medskip
{\it Step 2.} {\it Next we prove (\ref{coarse}), and  that $C$ lies
on a flag $S\subset F$, where $S$ is a surface of degree $s$, $F$ a
hypersurface of degree $t$, $S$ is unique, and the general
hyperplane section  of $S$ is a space extremal curve not contained
in any surface of degree $<t$.}

\smallskip
If $C$ were not contained in a surface of degree $s$, then, by
Theorem \ref{main} (and \cite[Theorem]{P4} in the case $s=t^2-t$),
one would have $g\leq G(d,s+1,t)+4t^3$. Since
$G(d,s+1,t)=\frac{d^2}{2(s+1)}+O(d)$, by previous step it would
follow $\frac{d^2}{2s}+O(d)\leq \frac{d^2}{2(s+1)}+O(d)$. This is in
contrast with the assumption $d\gg s$ (a direct and elementary
computation shows that we may assume  $d>s^4$). Therefore, there
exists a surface $S$ of degree $s$ containing $C$. Bezout's theorem
implies this surface is unique, because $d\gg s$. Let $\Sigma$ be
the general hyperplane section of $S$. Since $S$ is not contained in
a hypersurface of degree $<t$, and $s\geq t^2-2t+3$, then, by Roth's
theorem \cite[(C), p. 2]{EllP}, $\Sigma$ is not contained in a
surface of degree $<t$. Therefore, $p_a(\Sigma)\leq P$. On the other
hand, since $C\subset S$, by \cite[Lemma, (2.1)]{D}, we have $g\leq
\frac{d^2}{2s}+\frac{d}{2s}(2p_a(\Sigma)-2-s)+\frac{s^3}{2}$. By
Step 1 we deduce:
$$
\frac{d^2}{2s}+\frac{d}{2s}(2P-2-s)-\frac{s^3}{2}\leq g\leq \frac{d^2}{2s}+\frac{d}{2s}(2p_a(\Sigma)-2-s)+\frac{s^3}{2}.
$$
Since $d>s^4$, and $p_a(\Sigma)\leq P$, it follows that
$p_a(\Sigma)=P$. Taking into account (\ref{stimarho}) and
(\ref{GeP}), and that $p_a(\Sigma)=P$, from previous inequalities we
deduce (\ref{coarse}). Moreover, since $p_a(\Sigma)=P$, $\Sigma$ is
an extremal curve. In particular $\Sigma$ is a.C.M., and lies on a
space surface $F'$ of degree $t$. It follows that also $S$ is
a.C.M., and $F'$ lifts to a hypersurface $F$ of degree $t$
containing $S$.

\medskip
{\it Step 3.} {\it $C$ is a.C.M..}

\smallskip
Let $S$ be the surface of degree $s$ containing $C$, as above. Let
$\Gamma$ and $\Sigma$ be  the general hyperplane sections of $C$ and
$S$. Let  $X\subset \mathbf P^4$ be the cone over $\Sigma$. By
\cite[Lemma 18]{P4}, we know there exists an integral a.C.M. curve
$B$ on $X$, whose general hyperplane section $B'$ has the same
Hilbert function as $\Gamma$. The curve $B$ satisfies the flag
condition of type $(s,t)$, because $\deg B=d\gg s$, and $\Sigma$ is
not contained in a surface of degree $<t$. Therefore, we have
$p_a(B)\leq g$. It follows that
$$
p_a(B)=\sum_{i=1}^{+\infty} d-h_{B'}(i)=\sum_{i=1}^{+\infty}
d-h_{\Gamma}(i)\leq g.
$$
Since in general we have $g\leq \sum_{i=1}^{+\infty}
d-h_{\Gamma}(i)$, we deduce $g= \sum_{i=1}^{+\infty}
d-h_{\Gamma}(i)$. This proves that $C$ is a.C.M.

\medskip
{\it Step 4.} {\it The case $d=(m+1)s$.}

\smallskip
Let $S$ be the surface of degree $s$ containing $C$, as above. Let
$\Sigma$ denote the general hyperplane section of $S$. $\Sigma$ is
an a.C.M. space curve, not contained in any surface of degree $<t$,
with maximal genus $P=P(s,t)$. Let $B=S\cap F_{m+1}$ be a complete
intersection of $S$ with a hypersurface of degree $m+1$, and $B'$
its general hyperplane section. From the exact sequence $0\to
\mathcal O_S(-m-1)\to \mathcal O_S\to \mathcal O_B\to 0$, we get the
following relation between the Hilbert polynomials of $S$ and $B$:
$p_B(t)=p_S(t)-p_S(t-m-1)$. Taking into account that
$p_S(t)=s{\binom{t+1}{2}}+t(1-P)+1+p_a(S)$, we deduce (compare with
(\ref{2GeP})):
$$p_a(B)=\frac{d^2}{2s}+\frac{d}{2s}(2P-2-s)+1=G(d,s,t)-\frac{d}{s}(\beta-1).$$
Passing to the
hyperplane sections, we also have the exact sequence $0\to \mathcal
I_{\Sigma, \mathbf P^3}\to \mathcal I_{B', \mathbf P^3}\to \mathcal
O_{\Sigma}(-m-1)\to 0$. Taking into account that $\Sigma$ is a.C.M.,
we deduce the following relation between the Hilbert functions: for
every integer $n$, $h_{B'}(n)=h_{\Sigma}(n)-h_{\Sigma}(n-m-1)$.

Now we observe that, in order to prove the claim, it suffices to
prove that
\begin{equation}\label{dis}
h_{\Gamma}(n)\geq h_{B'}(n)
\end{equation}
for every integer $n$, where $\Gamma$ denotes the general hyperplane
section of $C$. In fact, since $B$ is a.C.M., from previous
inequality (\ref{dis}), we deduce $g=\sum_{i=1}^{+\infty}
d-h_{\Gamma}(i) \leq \sum_{i=1}^{+\infty} d-h_{B'}(i) =p_a(B)$. On
the other hand, since $B$ satisfies the flag condition of type
$(s,t)$, we also have $p_a(B)\leq g$. Therefore
$g=p_a(B)=G(d,s,t)-\frac{d}{s}(\beta-1)$, and $h_{\Gamma}(n)=
h_{B'}(n)$ for every $n$. In particular, we have $h_{\Gamma}(m+1)=
h_{B'}(m+1)=h_{\Sigma}(m+1)-h_{\Sigma}(0)= h_{\Sigma}(m+1)-1$. This
implies there exists a surface of degree $m+1$ containing $\Gamma$,
and not containing $\Sigma$. Since $C$ is a.C.M., this surface lifts
to a hypersurface $G$ of degree $m+1$, containing $C$ and not
containing $S$.  Since $S$ is Cohen-Macaulay, by degree reasons it
follows that $C$ is equal to the complete intersection  $S\cap G$.

In order to prove (\ref{dis}), we argue as follows. First, we observe that, when $n\leq
m$, we have $h_{\Gamma}(n)= h_{B'}(n)$ because both are equal to
$h_{\Sigma}(n)$, by degree reasons. Hence, we only have to examine
the case $n\geq m+1$. Now, from the equality
$h_{B'}(n)=h_{\Sigma}(n)-h_{\Sigma}(n-m-1)$, passing to the
difference, we get
\begin{equation}\label{acm}
\Delta h_{B'}(n)=h(n)-h(n-m-1)
\end{equation}
for every integer $n$, where the function $h=h(n)$ denotes the
Hilbert function of the general plane section of $\Sigma$ (see
(\ref{fh})). Observe that, since $d\gg s$, we have $h(n)=s$ for
$n\geq m$. Using (\ref{fh}), a direct computation proves that, for
every $n\geq m$, one has
$$
s-h(n-m-1)=F_{a,b}(n),
$$
where $F_{a,b}(n)$ is the function defined in \cite[Definition 13,
p. 129]{P4}, with $a=t-1$, and $b=m+t$. Therefore, we may write
$$h_{B'}(n)=\sum_{i=0}^{m}h(i)+\sum_{i=m+1}^{n} F_{a,b}(i)$$
for every $n$. Then we may prove the inequality (\ref{dis}), i.e.
$$
h_{\Gamma}(n)\geq h_{B'}(n)=\sum_{i=0}^{m}h(i)+\sum_{i=m+1}^{n}
F_{a,b}(i),
$$
with the same argument as in the proof of \cite[Proposition 15, p.
130]{P4}, taking into account that \cite[Proposition 8, p. 127]{P4},
\cite[Proposition 12, p. 129]{P4} and \cite[Lemma 14, p. 130]{P4}
hold true also for $s\geq t^2-2t+3$.  (compare with \cite[Remark 10,
p. 128]{P4}).
\end{proof}

\bigskip

\begin{proof}[Proof of Corollary \ref{c1}] Let $g$ be the arithmetic
genus of $C$. Since $d\cdot e(C)\leq 2g-2$, by Theorem \ref{main} we
deduce
$$
e(C)\leq \frac{2(G(d,s,t)+4t^3)-2}{d},
$$
from which (\ref{e1}) follows, taking into account the definition of
$G(d,s,t)$ (Section 2, $(ii)$), (\ref{stimarho}), and that $d\gg 0$.

If $d=(m+1)s$ and $s<t^2-t$, from (\ref{e1}) we get (\ref{e2}).

The bound (\ref{e2}) holds true also if $s=t^2-t$, because, in this
case, instead of the bound $g\leq G(d,s,t)+4t^3$ given by Theorem
\ref{main}, we may apply the more fine bound $g\leq
G(d,s,t)-\frac{d}{s}\left(\beta-1\right)$ given by Theorem
\ref{main2}.

In the range $t^2-2t+3\leq s\leq t^2-t$, the bound (\ref{e2}) is
sharp. In fact, let $B$ be a complete intersection on a surface as
in the claim, and denote by $B'$ its general hyperplane section.
Then $B$ is a.C.M., and therefore
$$
e(B)=\max\left\{n:\, h_{B'}(n)<d\right\}-1.
$$
Combining with (\ref{acm}) and (\ref{fh2}),  we get
$e(B)=\frac{d}{s}+2t-7$.
\end{proof}

\bigskip

\section{Appendix}

We keep all the notation stated in Section 1 and 2.

\bigskip
(i) The function $\rho=\rho(s,t,\epsilon)$ is defined as follows
(see \cite[p. 120]{P4}{\footnote{In the formula defining $\rho$ in
\cite[p. 120]{P4}, there is a misprint. In fact, in the case
$\epsilon\geq s-(\beta+1)(\alpha+\beta +2-t)$, the factor
$\alpha-\beta-6$ must be replaced  by $t-\beta -3$ (compare
 with \cite[p. 2708]{P5}, line 10 from below).}}).

\smallskip
If $\epsilon\geq s-(\beta+1)(\alpha+\beta +2-t)$, divide
$s-\epsilon-1=u(\alpha+\beta +2-t)+v$, and put:
$$
\rho:=\frac{s-1-\epsilon}{s}\left[\frac{s^2}{2t}+\frac{s}{2}(t-4)-\frac{(t-1-\beta)(1+\beta)(t-1)}{2t}+1\right]
+\frac{1+\epsilon}{2s}\left(s-\epsilon+1\right)
$$
$$
+{\binom{u+v+1}{2}}-\frac{1}{2}(\alpha+\beta)(2v+u\alpha+u\beta-u^2)+\frac{1}{2}u(t-1)(t-\beta-3)-1;
$$

if $\epsilon < s-(\beta+1)(\alpha+\beta +2-t)$, divide
$\epsilon=u(\alpha+\beta +1)+v$, and put:
$$
\rho:=\frac{s-1-\epsilon}{s}\left[\frac{s^2}{2t}+\frac{s}{2}(t-4)-\frac{(t-1-\beta)(1+\beta)(t-1)}{2t}+1\right]
+\frac{1+\epsilon}{2s}(s-\epsilon+1)
$$
$$
-\frac{1}{2}(\alpha+\beta)(t-u-1)(\alpha+t+u-3)+\frac{1}{2}\beta(t-1)(2\alpha-6)-1.
$$

Similarly, we define $\rho'=\rho(s,\tau,\epsilon)$ (compare with
Section 2, (i)).

\bigskip (ii)
The number $R$ appearing in (\ref{bduke}) is defined as follows
(\cite{Duke}, \cite[p. 91-92, (4) and (4$'$)]{Speciality}).

First, define $k$ and $\delta$ by dividing $\epsilon=kw+\delta$,
$0\leq \delta<w$, when $\epsilon < (3-w_1)w$. Otherwise, define $k$
and $\delta$ by dividing $\epsilon+2-w_1=k(w+1)+\delta$, $0\leq
\delta< w+1$. Then we have:
$$
R:=\frac{1+\epsilon}{2s}(s+1-\epsilon-2\pi)+w(\epsilon-\delta)-k{\binom{w+1}{2}}+{\binom{\delta}{2}}.
$$

\bigskip (iii)
{\it Sketch of the proof of (\ref{stimarho})}. We only prove that
$|\rho|\leq 2t^3$ in the case $\epsilon\geq s-(\beta+1)(\alpha+\beta
+2-t)$. The analysis of the case $\epsilon <
s-(\beta+1)(\alpha+\beta +2-t)$, and the proof of the estimate
$|\rho'|\leq 2t^3$, are quite similar, therefore we omit them.

Set:
\begin{equation}\label{acca}
H:=H(s,t):=\frac{s^2}{2t}+\frac{s}{2}(t-4)-\frac{(t-1-\beta)(1+\beta)(t-1)}{2t}+1.
\end{equation}
This number is the coefficient of the term $\frac{s-1-\epsilon}{s}$
appearing in the definition of $\rho$. By the way, notice that, if
$s>t^2-t$, then $H$ is the Halphen's bound for the genus of a space
curve of degree $s$, not contained in any surface of degree $<t$
(\cite[p. 1]{GP1}, \cite[10.8. Teorema, p. 56]{Ellia}). We also
notice we may write:
\begin{equation}\label{Gacca}
G(d,s,t)=\frac{d^2}{2s}+\frac{d}{2s}\left(2H-2-s\right)+\rho+1.
\end{equation}
Taking into account that $s-1=\alpha t+\beta$, we may rewrite $H$:
\begin{equation}\label{2acca}
2H=\alpha t^2+(\alpha^2-4\alpha)t+\beta^2+2\alpha+(2\alpha-1)\beta.
\end{equation}
The function $\alpha\to \alpha^2-4\alpha$ is growing for $\alpha
\geq 2$. Therefore, when $\alpha\geq 2$, since $0\leq \alpha\leq
t-2$ and $0\leq \beta\leq t-1$, it follows that:
\begin{equation}\label{manner}
0\leq 2H\leq
(t-2)t^2+((t-2)^2-4(t-2))t+(t-1)^2+\quad\quad\quad\quad\quad\quad\quad\quad
\end{equation}
$$
\quad\quad\quad\quad\quad\quad+2(t-2)+(2(t-2)-1)(t-1)
=2\left(t^3-\frac{7}{2}t^2+\frac{5}{2}t+1\right).
$$
This inequality holds true also for $\alpha\leq 1$. Hence
\begin{equation}\label{H}
0\leq H\leq t^3-\frac{7}{2}t^2+\frac{5}{2}t+1.
\end{equation}
Since $\frac{s-1-\epsilon}{s}\geq 0$, $H\geq 0$, and $t-\beta-3\geq
-2$, from the definition of $\rho$ we deduce:
$$
\rho\geq -\frac{1}{2}(\alpha+\beta)(2v+u\alpha+u\beta)-u(t-1)-1.
$$
Taking into account that
\begin{equation}\label{rho1bis}
s\leq t^2-t,\quad \alpha\leq t-2,\quad v\leq \alpha+\beta+1-t\leq
\beta-1\leq t-2, \quad u\leq \beta\leq t-1,
\end{equation}
substituting in a similar manner as in
(\ref{manner}), it follows that
\begin{equation}\label{rho1}
\rho\geq -2t^3+5t^2-\frac{3}{2}t-\frac{7}{2}\geq -2t^3.
\end{equation}
Moreover, since  $\frac{s-1-\epsilon}{s}\leq 1$,
$$
\frac{1+\epsilon}{2s}(s-\epsilon+1)\leq \frac{1}{2}(s+1)\leq
\frac{1}{2}(t^2-t+1),
$$
and $u\leq \beta$ (which implies that $u\beta-u^2\geq 0$, so
$-\frac{1}{2}(\alpha+\beta)(2v+u\alpha+u\beta-u^2)\leq 0$), from
(\ref{H}), (\ref{rho1bis}), and the definition of $\rho$, it follows
that:
$$
\rho\leq H +
\frac{1}{2}(t^2-t+1)+\frac{1}{2}(u+v+1)(u+v)+\frac{1}{2}ut(t-1)\leq
$$
$$
\leq \left(t^3-\frac{7}{2}t^2+\frac{5}{2}t+1\right) +
\frac{1}{2}(t^2-t+1)+\frac{1}{2}(2t-2)(2t-3)+\frac{1}{2}t(t-1)^2=
$$
$$
=\frac{3}{2}t^3-2t^2-\frac{5}{2}t+\frac{9}{2}\leq 2t^3.
$$
Combining this estimate with (\ref{rho1}), we deduce $|\rho|\leq
2t^3$, in the case $s\leq t^2-t$ and $\epsilon\geq
s-(\beta+1)(\alpha+\beta +2-t)$.

\bigskip (iv)
{\it Proof of (\ref{stimaR})}. Recall that $s-1=2w+w_1$, $0\leq
w_1\leq 1$, and $\pi=w(w-1+w_1)$ (compare with Section 2, (iii), and
with this Appendix, (ii)). Hence we have:
$$
s+1-\epsilon-2\pi\leq s+1-2\pi=\frac{1}{2}(-s^2+6s+w_1^2-2w_1-1)\leq
\frac{1}{2}(6s-s^2).
$$
Therefore, if $s\geq 6$, then $s+1-\epsilon-2\pi\leq 0$. In this
case, taking into account that $w\leq (s-1)/2$ and that $\delta\leq
w$, we have:
$$
R\leq w\epsilon+\frac{1}{2}\delta(\delta-1)\leq
w(s-1)+\frac{1}{2}w(w-1)\leq \quad \quad\quad\quad\quad
$$
$$
\quad\quad\quad\quad\leq
\frac{1}{2}(s-1)^2+\frac{1}{2}\frac{s-1}{2}\frac{s-3}{2}\leq
\frac{5}{8}(s-1)^2\leq s^2.
$$
An easy direct computation shows that the inequality $R\leq s^2$
holds true also when $3\leq s\leq 5$. Therefore we have:
\begin{equation}\label{pR}
R\leq s^2.
\end{equation}
On the other hand we have:
$$
R\geq
\frac{1+\epsilon}{2s}(-\epsilon-2\pi)-w\delta-\frac{1}{2}w(w+1)k.
$$
Hence:
$$
-R\leq \frac{1}{2}(s-1+2\pi)+w\delta+\frac{1}{2}w(w+1)k=
$$
\begin{equation}\label{sR}
=\frac{1}{2}\left(\frac{1}{2}s^2-s+\frac{1}{2}+w_1-\frac{1}{2}w_1^2\right)+w\delta+\frac{1}{2}w(w+1)k.
\end{equation}
When $\epsilon < w(3-w_1)$, then $\delta\leq w-1$ and $k\leq 2$.
Therefore, in this case, from (\ref{sR}) we have:
$$
-R\leq \frac{1}{2}\left(\frac{1}{2}s^2-s+1\right)+w(w-1)+w(w+1)=
$$
$$
\frac{1}{2}\left(\frac{1}{2}s^2-s+1\right)+2w^2\leq
\frac{1}{2}\left(\frac{1}{2}s^2-s+1\right)+2\frac{(s-1)^2}{4}=
\frac{3}{4}s^2-\frac{3}{2}s+1\leq s^2.
$$
When $\epsilon \geq w(3-w_1)$, then $\delta\leq w$ and $k\leq
\frac{2(s+1)}{s}$. From  (\ref{sR}) we get:
$$
-R\leq
\frac{1}{2}\left(\frac{1}{2}s^2-s+1\right)+w^2+w(w+1)\frac{s+1}{s}\leq
$$
$$
\frac{1}{2}\left(\frac{1}{2}s^2-s+1\right)+\frac{(s-1)^2}{4}+(s^2-1)\frac{s+1}{4s}=
\frac{1}{4s}\left(3s^3-3s^2+2s-1\right)\leq s^2.
$$
Combining with (\ref{pR}), we get $|R|\leq s^2$.

\bigskip (v)
{\it Proof of  Lemma \ref{lemma1}}. Consider the coefficient of
$\frac{d}{2}$ in the expression defining $G(d,s,t)$ and
$G(d,s,\tau)$ (Section 2, (ii)):
$$
A:=\frac{s}{t}+t-5-\frac{(t-1-\beta)(1+\beta)(t-1)}{st},
$$
$$
A':=\frac{s}{\tau}+\tau-5-\frac{(\tau-1-\beta')(1+\beta')(\tau-1)}{s\tau}.
$$
We have:
\begin{equation}\label{G}
G(d,s,\tau)=G(d,s,t)+\frac{d}{2}(A'-A)+(\rho'-\rho).
\end{equation}
Observe that (compare with (\ref{acca})):
$$
H=\frac{s}{2}(A+1)+1, \quad H'=\frac{s}{2}(A'+1)+1,
$$
where $H'=H(s,\tau)$. Hence, by (\ref{2acca}) (compare with Section
2, (i)), we have:
$$
A'-A=\frac{2}{s}(H'-H)=\frac{1}{s}\left[\alpha'\tau^2+(\alpha'^2-4\alpha')\tau+\beta'^2
+2\alpha'+(2\alpha'-1)\beta'\right]
$$
$$
-\frac{1}{s}\left[\left(\alpha t^2+(\alpha^2-4\alpha)t+\beta^2
+2\alpha+(2\alpha-1)\beta\right) \right].
$$
Simplifying, we get:
$$
A'-A=\frac{\alpha x}{s}(\alpha x+\alpha+x-2t+3+2\beta).
$$
Hence, if $\alpha x=0$, then $A'=A$. When $\alpha>0$ and $x>0$,
since $-(t-\beta)<-x(\alpha+1)$, we have:
$$
\alpha x+\alpha+x-2t+3+2\beta=\alpha x+\alpha+x-2(t-\beta)+3
$$
$$
\leq \alpha x+\alpha+x-2(x(\alpha+1)+1)+3=(1-x)(\alpha+1)\leq 0.
$$
If $x=1$, then the number
$$
\alpha x+\alpha+x-2t+3+2\beta=2\alpha+4-2t+2\beta
$$
vanishes if and only if $\beta=t-\alpha-2$. Summing up, we get: {\it
in any case, one has  $A'\leq A$. Moreover,  $A'=A$ if and only if
either $\alpha=0$ or $x=0$ or $x=1$ and $\beta=t-\alpha-2$, i.e. if
and only if either $s\leq t$ or $s\geq t+1$ and $t-\alpha-2\leq
\beta <t$. In particular, when $A'<A$, then $A-A'\geq
\frac{\alpha}{s}\geq \frac{1}{st}$}.

We deduce the following.

1) If $t+1\leq s\leq t^2-t$ and $\beta <t-\alpha-2$, then $A'<A$.
Therefore, from (\ref{stimarho}) and (\ref{G}), we deduce that
$G(d,s,\tau) < G(d,s,t)$ for $d>8st^4$. In fact, in this case, we
have $\frac{d}{2}(A'-A)+(\rho'-\rho)<0$, because $
\frac{2(\rho'-\rho)}{A-A'}\leq 2\cdot 4t^3\cdot st$.

\smallskip
2) If $s\leq t^2-t$ and $t-\alpha-2<\beta$, then $A=A'$. Hence,
(\ref{G}) becomes $G(d,s,\tau)=G(d,s,t)+(\rho'-\rho)$. A direct
computation, which we omit,  shows that, in this case, if either
$s-\epsilon-1<\alpha+\beta+2-t$ or $\beta(\alpha+\beta+2-t)\leq
s-\epsilon-1<(\beta+1)(\alpha+\beta+2-t)$, then $\rho=\rho'$. Hence,
we have $G(d,s,\tau)=G(d,s,t)$.

\smallskip
3) If either $s\leq t$ or $t+1 \leq s\leq t^2-t$ and $t-\alpha
-2\leq \beta$, then $A=A'$. Therefore, by (\ref{stimarho}) and
(\ref{G}), we get $G(d,s,\tau)=G(d,s,t)+(\rho'-\rho)\leq
G(d,s,t)+4t^3$.

\medskip
This concludes the proof of Lemma \ref{lemma1}.

\bigskip (vi)
{\it Proof of  Lemma \ref{lemma2}}. Consider the coefficient of
$\frac{d}{2}$ in the formula (\ref{bduke}) defining $G$:
$$
A'':=\frac{2\pi-2}{s}-1.
$$
We have:
\begin{equation}\label{GG}
G=G(d,s,t)+\frac{d}{2}(A''-A)+(R-\rho-1).
\end{equation}
A direct computation proves that:
$$
A''-A=\frac{1}{2s}\left[(\alpha^2-2\alpha)t^2+(2\alpha\beta+6\alpha-2\alpha^2)t+
(-4\alpha-\beta^2-4\alpha\beta+w_1)\right].
$$
If $\alpha=0$, i.e. $s\leq t$, then $s=\beta+1$, and
$$
A''-A=\frac{1}{2s}(-\beta^2+w_1).
$$
If $t+1\leq s\leq 2t-3$, then $\alpha=1$, and we have:
$$
A''-A=\frac{1}{2s}\left[-(t-\beta-2)^2+w_1\right].
$$
In both cases we have $A''-A<-\frac{1}{2s}$. Therefore, from
(\ref{GG}) and (\ref{stimaR}), we deduce that $G < G(d,s,t)$ for $d>
32t^4$, because in this case $\frac{d}{2}(A''-A)+(R-\rho-1)<0$ (in
fact: $ \frac{2(R-\rho-1)}{A-A''}\leq 4s(s^2+2t^3)\leq 32t^4$).

This concludes the proof of Lemma \ref{lemma2}.

\bigskip
\begin{remark}\label{notaconclusiva}
(i) A similar argument shows that if $2t-2\leq s\leq 2t$, then
$A''=A$, and that if $t>2$ ed $s\geq 2t+1$, then $A''>A$.

\smallskip
(ii) When $s\geq t+1$ and $t-\alpha-2\leq \beta$, it may happen that
$G(d,s,\tau)>G(d,s,t)$. For instance, if $s=t^2-2t+6$ and
$\epsilon=s-25$, then $\rho'-\rho=2(t+1)$.
\end{remark}

\bigskip
\bigskip
{\it Acknowledgment.} I would like to thank Luca Chiantini and Ciro
Ciliberto for valuable discussions and suggestions, and their
encouragement.

\bigskip

\end{document}